\begin{document}

\title*{Unbiased likelihood estimation of Wright-Fisher diffusion processes}
\titlerunning{Unbiased estimation Wright-Fisher diffusions}
\author{Celia García-Pareja and Fabio Nobile}
\authorrunning{García-Pareja and Nobile}
\institute{Celia García-Pareja, Fabio Nobile \at Institute of Mathematics, EPF Lausanne, Station 8, CH-1015 Lausanne, Switzerland 
\and Celia García-Pareja \at Institute of Bioengineering, EPF Lausanne, Station 15, CH-1015 Lausanne, Switzerland, \email{celia.garciapareja@epfl.ch} \and Fabio Nobile, \email{fabio.nobile@epfl.ch}}

\maketitle

\abstract{In this paper we propose a Monte Carlo maximum likelihood estimation strategy for discretely observed Wright-Fisher diffusions. Our approach provides an unbiased estimator of the likelihood function and is based on exact simulation techniques that are of special interest for diffusion processes defined on a bounded domain, where numerical methods typically fail to remain within the required boundaries. We start by building unbiased likelihood estimators for scalar diffusions and later present an extension to the multidimensional case. Consistency results of our proposed estimator are also presented and the performance of our method is illustrated through numerical examples.}

\section{Introduction}
\label{GaNosec:1}
Problems that can be modeled as continuous-time phenomena are ubiquitous in the sciences, and have a wide range of applicability. In this context, diffusion models have been extensively used in numerous areas, namely, in economics, physics, the life sciences and engineering. However, inference on diffusion models for discretely observed data is challenging because a closed-form expression of the transition density of the process, and thus, of the likelihood, is often unavailable, see \cite{GaNoSoerensen2004} for a survey on available inference methods for diffusion models, or \cite{GaNoCraigmile2022} for a more recent review.

 The object of study in this paper is the Wright-Fisher diffusion process $X=\{X_t, 0\leq t\leq T\}$ determined as the weakly unique solution of the scalar stochastic differential equation (SDE)
\begin{equation}\label{GaNoeq:WF}
 dX_t=\gamma(X_t, \vartheta) dt + \sqrt{X_t(1-X_t)}dB_t, \ t\in [0,T],\ X_0=x_0\in [0,1],
\end{equation}
where $\gamma(\cdot, \vartheta):[0,1]\mapsto \mathbb{R}$ is the drift function that depends on an unknown scalar parameter $\vartheta\in\Theta$ and satisfies the usual regularity conditions (it is locally bounded and with a linear growth bound), and $B_t$ denotes a one-dimensional Brownian motion.

In the following, we will refer to the transition density function of $X$ as
\begin{equation}\label{GaNoeq:transWF}
 p_{\vartheta}(x, y; t)=P_{\vartheta}(X_t\in dy |X_0=x)/dy,\ t>0, \ x, y\in[0,1]
\end{equation}
where $t$ refers to the time increment between the instances $x$ and $y$.

The Wright-Fisher diffusion process has been widely used in population genetics modeling, where it describes the evolution of the frequency of different genetic variants over time. The advent of whole genome sequencing, and the subsequent increased availability of observed genetic data, calls for the development of suitable inference methods. 

In its simplest case, the Wright-Fisher diffusion model considers two genetic variants, namely, type  $a$ and type $A$. Thus, $X$ describes the frequency of variant $a$ over time $t\in[0,T]$,  and the drift function $\gamma(X_t, \vartheta)$ incorporates different evolutionary forces, e.g., mutation or natural selection. In this paper, we consider drift functions  $\gamma(x; \vartheta)$ admitting the general form
\begin{equation*}
\gamma(x; \vartheta)=\alpha(x)+x(1-x) \eta(x; \vartheta), \vartheta\in\Theta, 
 \end{equation*}
 where $\alpha(x)=\frac12(\theta_a-(\theta_a+\theta_A)x)$ and $\eta(x; \vartheta)$ is continuously differentiable in [0,1].
 
 In a population genetics context, $\alpha(x)$ describes the recurrent mutant behaviour of the process, with $\theta_a$ and $\theta_A$ strictly positive and denoting the rates of mutation towards type $a$ and type $A$, respectively. Furthermore, $\eta(x; \vartheta)$ describes the natural selection pattern accounting for possible fitness differences between the types. 
 
 In case $\eta(x; \vartheta)\equiv 0$, the population is said to follow neutral evolution, where none of the types displays a selective advantage, that is, all types are equally likely to reproduce. 
 
 A question of interest in applications is whether a newly appeared mutation is likely to sweep over the population. Think, for instance, in seasonal flu vaccine planning, where interest lies in predicting the most prevalent genetic variant a year ahead of time, so that effective vaccines can be designed before the next season, see, e.g., \cite{GaNoNeher2014}, \cite{GaNoLuksza2014}, \cite{GaNoBarrat-Charlaix2021}. In order to study deviations from neutrality, accurate inferences on $\vartheta$ are needed.

 Our estimating approach is based on the Simultaneous Acceptance Method (SAM) presented in \cite{GaNoBeskos2006a}. In short, our method provides an unbiased estimator of the likelihood of model (\ref{GaNoeq:WF}), based on Monte Carlo samples drawn from the recently developed exact simulation algorithms for Wright-Fisher diffusions, see \cite{GaNoJenkins2017}, \cite{GaNoGarcia-Pareja2021}. Uniform convergence results of our Monte Carlo estimator to the true likelihood function ensure convergence of the maximizers and are used to show consistency of our maximum likelihood estimator (MLE).
 
 The main advantage of our method is that it is based on samples drawn from the exact probability distribution of model (\ref{GaNoeq:WF}). Likelihood methods for discretely observed diffusions are usually approximate because of the unavailability of the transition density function. However, in the context of diffusions with bounded state space such as the  Wright-Fisher diffusion, approximate numerical methods fail to stay within the state space and perform poorly near the boundaries, see \cite{GaNoDangerfield2012}. Thus, estimates based on numerical approximations might yield biologically meaningless and unrealistic results.
 
 Inference methods for SDEs that use approximate likelihood-based approaches include computationally intensive imputation methods \cite{GaNoRoberts2001}, methods which approximate analytically the transition density \cite{GaNoAit-Sahalia2002} or methods that use closed-form density expansions \cite{GaNoLi2013}. Moreover, approximate Monte Carlo maximum likelihood approaches have been proposed, see \cite{GaNoDurham2002}. Methods based on non-likelihood approaches include estimating functions \cite{GaNoBibby2010}, efficient method of moments \cite{GaNoGallant1996} and indirect inference \cite{GaNoGourieroux1993}. More specifically related to our work are other approaches on inference for Wright-Fisher diffusions, see \cite{GaNoSchraiber2013}, \cite{GaNoTataru2017}.
  
 It is also worth mentioning an entire line of work on inference for SDEs with continuous data, see, e.g., \cite{GaNoAbdulle2021}, \cite{GaNoGriffiths2023}. However, in this paper we focus our attention in discretely observed diffusions, where observations are assumed to be given at a given collection of time points and without error.

The rest of this paper is structured as follows. In Section \ref{GaNosec:estimation} we present our general estimation approach, followed by a detailed construction of our proposed Monte Carlo estimator, which we expose in Section \ref{GaNo:sec_MCest}. Section \ref{GaNo:Sec_Num_exp} is devoted to show the performance of our proposed estimator in two illustrative numerical examples, and in Section \ref{GaNo:Sec_Num_CWF} we show the applicability of our method to the multivariate case. Finally, Section \ref{GaNo:Sec_Conc} concludes the paper.

\section{Estimation approach}
\label{GaNosec:estimation}
Before proceeding with the basic formulation of our inferential problem, we fix the following preliminary notation. Let $C\equiv C([0,T], [0,1])$ be the set of continuous mappings from $[0,T]$ to $[0,1]$ and let us denote $\omega$ a typical element of $C$. Consider the cylinder $\sigma$-algebra $\Sigma=\sigma(\{X_t, 0\leq t\leq T\})$, where $X_t$ are the coordinate mappings $X_t: C\to [0,1]$ such that for every $t\in[0,T]$, $X_t(\omega)=\omega(t)$. In what follows, we also assume that $\Theta$ is a compact subset of $\mathbb{R}^d$ that contains the MLE of $\vartheta$.

Given $n$ discrete observations of a path from process (\ref{GaNoeq:WF}) observed without error at times $0<t_1<\ldots<t_n=T$ , $\{x_{t_i}\}_{i=1}^n$, the aim is to propose a MLE, $\vartheta^n$, for the selection parameter $\vartheta$. Let us denote $x_{t_i}\equiv x^i$. Following the definition in (\ref{GaNoeq:transWF}), the likelihood of the process reads
\begin{equation*}
\mathcal{L}^n(\vartheta)=\prod_{i=1}^n L^i(\vartheta)=\prod_{i=1}^n p_{\vartheta}(x^{i-1},x^i; \Delta t_i),
\end{equation*}

\noindent where $\Delta t_i=t_i-t_{i-1}$, for $i=1,\ldots, n$, and $t_0=0$. 

Because of the Markov property of the Wright-Fisher diffusion, each contribution $L^i(\vartheta)$ can be estimated independently. Let $x=x^{i-1}$ and $y=x^{i}$ for $i=1,\ldots, n$. In what follows, we write $L(\vartheta)$ to refer to any contribution $L^i(\vartheta)$.

In this paper we exploit the exact simulation technique proposed in \cite{GaNoJenkins2017} to sample paths of $X$, the weakly unique solution of (\ref{GaNoeq:WF}). In order for our estimating strategy to be applicable, we require the following assumptions:
\begin{itemize}
    \item[1.] The drift function $\gamma(x;\vartheta)$ is such that 
    $$\gamma(x;\vartheta)=\alpha(x)+x(1-x)\eta(x;\vartheta),$$ 
    with $\alpha(x)=\frac12(\theta_a-(\theta_a+\theta_A)x)$ for given $\theta_a, \theta_A>0$ and $\eta(\cdot; \vartheta)$ is continuously differentiable in [0,1], for any $\vartheta\in\Theta$.
    \item [2.] The function $\eta(x; \cdot)$ is continuous in $\Theta$, for any $x\in[0,1]$.
    \item[3.] The function $\phi:[0,1]\times\Theta\to \mathbb{R}$,
    $$
    \phi(x;\vartheta)=\frac12[x(1-x)(\eta^2(x;\vartheta)+\eta'(x;\vartheta))+2\eta(x;\vartheta)\alpha(x)],
    $$
    is continuous in $[0,1]$ and thus bounded, with $\phi^-(\vartheta)\leq \phi(x;\vartheta) \leq \phi^+(\vartheta), \forall \vartheta\in \Theta$ and $x\in[0,1]$, with $\phi^-(\cdot)$ and $\phi^+(\cdot)$ continuous in $\Theta$.
    \item[4.]
    The function $A:[0,1]\times\Theta\to \mathbb{R}$,
    $$
        A(x;\vartheta)=\int_0^x \eta (z;\vartheta) dz
    $$
    is continuous in $[0,1]$ and thus bounded, with $A^+(\vartheta)\geq A(x;\vartheta), \forall \vartheta\in \Theta$ and $x\in[0,1]$, with $A(x; \cdot)$ continuous in $\Theta$.
   \end{itemize}

Let $\mathbb{WF}_{\vartheta}$ denote the probability measure induced by the solution $X$ of (\ref{GaNoeq:WF}) on $(C, \Sigma)$ and $\mathbb{WF}$ the probability measure induced by the corresponding process $X^{\alpha}$, that is, the restriction of $X$ to the neutral case where $\eta(x; \vartheta)\equiv 0$. Then, if assumption 1. holds, and $\phi(\cdot;\vartheta)$ and $A(\cdot;\vartheta)$ are defined as above, the Radon-Nykodým derivative of $\mathbb{WF}_{\vartheta}$ w.r.t. $\mathbb{WF}$ exists and follows from Girsanov's transformation of measures and Itô's formula, with
\begin{equation}\label{GaNoeq:Girsanov_uncond}
 \dfrac{d\mathbb{WF}_{\vartheta}}{d\mathbb{WF}}(\omega)=\exp\left\{A(\omega_T;\vartheta)-A(\omega_0;\vartheta)\right\}\exp\left\{-\int_0^T \phi(\omega_s;\vartheta) ds\right\}.
\end{equation}

The following lemma enables our proposed estimating approach.
\begin{lemma}\label{GaNolem:Bridges}
 Let $\mathbb{WF}_{\vartheta}^{t,x,y}$ denote the probability measure induced by the solution $X$ of (\ref{GaNoeq:WF}) on $(C, \Sigma)$ conditioned to start at $x$ and to finish at $y$ after time $t$. Moreover, consider $\mathbb{WF}^{t,x,y}$ the probability measure induced by the corresponding process $X^{\alpha}$, then
$$
p_{\vartheta}(x,y;t)=p(x,y,t)\exp\left\{A(y; \vartheta)-A(x; \vartheta)-t\phi^-(\vartheta)\right\} a(x,y;\vartheta),
$$
where $p(x,\cdot,t)$ denotes the probability density of $X^{\alpha}$ and 
$$
a(x,y;\vartheta)=
\mathbb{E}_{\mathbb{WF}^{t,x,y}}\left[\exp\left\{-\int_0^t \phi(\omega_s;\vartheta)-\phi^-(\vartheta) ds\right\}\right],
$$
with $\omega\sim \mathbb{WF}^{t,x,y}$, $\omega=\{\omega_s, s\in[0,t], \text{ such that } \omega_0=x, \omega_t=y\}$ and $\phi^-(\vartheta)$ a lower bound of the mapping $s\mapsto \phi(\omega_s;\vartheta)$.
\end{lemma}

\begin{proof}
By Assumption 1. and the definitions of $\phi(\cdot;\vartheta)$ and $A(\cdot;\vartheta)$ provided in 3. and 4., we can write the Radon-Nykodým derivative of $\mathbb{WF}_{\vartheta}$ w.r.t. $\mathbb{WF}$ as in (\ref{GaNoeq:Girsanov_uncond}). Using Bayes theorem, we have:
\begin{eqnarray*}
 \dfrac{d\mathbb{WF}_{\vartheta}^{t,x,y}}{d\mathbb{WF}^{t,x,y}}(\omega)
 &=&\frac{p(x,y,t)}{p_{\vartheta}(x,y;t)} \dfrac{d\mathbb{WF}_{\vartheta}}{d\mathbb{WF}}(\omega)\\
 &=&\frac{p(x,y,t)}{p_{\vartheta}(x,y;t)} \exp\left\{A(y; \vartheta)-A(x; \vartheta))\right\}\exp\left\{-\int_0^t \phi(\omega_s;\vartheta) ds\right\},
\end{eqnarray*}
Taking expectations at both sides w.r.t. $\mathbb{WF}^{t,x,y}$ and multiplying by $p_{\vartheta}(x,y;t)$ yields
 \begin{eqnarray*}
 p_{\vartheta}(x,y;t)
 =&\\&p(x,y,t)\exp\left\{A(y; \vartheta)-A(x; \vartheta))\right\}\mathbb{E}_{\mathbb{WF}^{t,x,y}}\left[\exp\left\{-\int_0^t \phi(\omega_s;\vartheta) ds\right\}\right]
\end{eqnarray*}
where we recall that 
$$\mathbb{E}_{\mathbb{WF}^{t,x,y}}\left[\dfrac{d\mathbb{WF}_{\vartheta}^{t,x,y}}{d\mathbb{WF}^{t,x,y}}\right]=\int_C \dfrac{d\mathbb{WF}_{\vartheta}^{t,x,y}}{d\mathbb{WF}^{t,x,y}}d\mathbb{WF}^{t,x,y}= \mathbb{WF}_{\vartheta}^{t,x,y}(C)=1.$$

Finally, we have
$$
p_{\vartheta}(x,y;t)=p(x,y,t)\exp\left\{A(y; \vartheta)-A(x; \vartheta)-t\phi^-(\vartheta)\right\} a(x,y;\vartheta),
$$
which concludes the proof.
\end{proof}
\section{Unbiased Monte Carlo estimator}\label{GaNo:sec_MCest}
Following \cite{GaNoBeskos2006a} and \cite{GaNoBeskos2009}, we shall devise a random function such that the mapping $\vartheta\mapsto L(\Lambda,\vartheta)$ is a.s. continuous, the random element $\Lambda$ is independent of $\vartheta$, and such that for any fixed $\vartheta$ in the parameter space
\begin{equation*}
L(\vartheta)= \mathbb{E}[L(\Lambda,\vartheta)],
\end{equation*}
where the expectation is taken w.r.t. the probability law of the element $\Lambda$. Note that $\mathbb{E}[L(\Lambda,\vartheta)]$ is amenable to Monte Carlo estimation by the functional averages
\begin{equation}\label{GaNoeq:Lik_MC_estim}
L_N(\vartheta)=\frac{1}{N}\sum_{j=1}^N L(\Lambda^j,\vartheta),
\end{equation}
where $\Lambda^j, j=1,\ldots, N$ are independent Monte Carlo samples.

In view of Lemma \ref{GaNolem:Bridges}, we need to define
\begin{equation}\label{GaNoeq:likelihood_contr}
L(\Lambda, \vartheta)=\exp\left\{A(y; \vartheta)-A(x; \vartheta)-t\phi^-(\vartheta)\right\} \Pi(\Lambda,x,y;\vartheta),
\end{equation}
where $\Pi(\Lambda,x,y;\vartheta)=p(M,x,y,t)a(\Upsilon,\omega,x,y;\vartheta)$, with $\Lambda=(M,\Upsilon, \{\omega_{t_i}, 1\leq i \leq K\})$ and $M$ independent of $\Upsilon$ and $\omega$, so that
$$
\mathbb{E}[\Pi(\Lambda,x,y;\vartheta)]=\mathbb{E}[p(M,x,y,t)]\mathbb{E}[a(\Upsilon,\omega,x,y;\vartheta)],
$$
with $\mathbb{E}[p(M,x,y,t)]=p(x,y,t)$ and $\mathbb{E}[a(\Upsilon,\omega,x,y;\vartheta)]=a(x,y;\vartheta)$.

In the next subsections, the functions $p(M,x,y,t)$ and $a(\Upsilon,\omega,x,y;\vartheta)$ are described. 

\subsection{Exact rejection sampling of Wright-Fisher diffusion bridges}\label{GaNosec:bg}

In this subsection, we briefly summarize the exact rejection algorithm for simulating Wright-Fisher diffusion bridges and that is central for the construction of the estimator in (\ref{GaNoeq:Lik_MC_estim}).

The rejection scheme is based on Lemma \ref{GaNolem:Bridges} above and uses candidate paths distributed according to $\mathbb{WF}^{t,x,y}$, for which an exact simulation procedure exists, see Algorithms 4 and 5 in \cite{GaNoJenkins2017} and Algorithms 5 and 6 in \cite{GaNoGarcia-Pareja2021} for a multivariate generalization. Note that, if assumptions 3. and 4. are satisfied, then,
\begin{equation*}
 \dfrac{d\mathbb{WF}^{t,x,y}_{\vartheta}}{d\mathbb{WF}^{t,x,y}}(\omega)\propto %\exp\left\{A(X_t;\vartheta)-A^+(\vartheta)\right\}
 \exp\left\{-\int_0^t \phi(\omega_s;\vartheta)- \phi^-(\vartheta)ds\right\},
\end{equation*}
where we recognize the right hand side as
\begin{equation}\label{GaNoeq:Po_Prob}
\Pr(N=0|\omega)=\exp\left\{-\displaystyle\int_0^t \phi(\omega_{s};\vartheta)-\phi^-(\vartheta) ds\right\}\leq 1, 
\end{equation}
where $N$ is the number of points of a marked Poisson process $\Phi$ on $[0,t]\times [0,1]$ with rate $\phi^+(\vartheta)$ that lie below the graph of $$s\mapsto g(\omega_{s}; \vartheta): = \frac{\phi(\omega_{s};\vartheta)-\phi^-(\vartheta)}{\phi^+(\vartheta)-\phi^-(\vartheta)}.$$

In the following, we formally describe how the expression in (\ref{GaNoeq:Po_Prob}) can be rewritten as the expectation of a certain indicator, for which an unbiased estimator is available.

Consider $\Phi=\{\Upsilon, \Psi\}$ and $K\sim\text{Po}(\phi^+(\vartheta)t)$, with $\Upsilon=\{t_1,\ldots, t_K\}$ the projection of $\Phi$ on the time-axis with time-ordered points $t_i, 1\leq i\leq K$ that are uniformly distributed on $[0,t]$, and $\Psi=\{\psi_1,\ldots, \psi_K\}$ their corresponding marks that are uniformly distributed on $[0,1]$.

Then Algorithm \ref{GaNoalg:CWFdiff} provides skeletons of paths $\omega\sim \mathbb{WF}_{\vartheta}^{(t,x,y)}$, where
$$
I(x,y, \vartheta, \Phi, \omega )=\prod_{i=1}^K \mathbb{I} \left[\frac{\phi(\omega_{t_i};\vartheta)-\phi^-(\vartheta)}{\phi^+(\vartheta)-\phi^-(\vartheta)}\leq \psi_i\right]
$$
is the acceptance indicator. 
\begin{algorithm}
\caption{Exact rejection algorithm for simulating skeletons of paths $\omega\sim \mathbb{WF}_{\vartheta}^{(t,x,y)}$}\label{GaNoalg:CWFdiff}
\algsetup{linenodelimiter=}
\begin{algorithmic}[1] % enter the algorithmic environment
    \STATE Simulate $\Phi$, a Poisson process on $[0,t]\times[0,1]$ with rate $\phi^+(\vartheta)$
    \STATE Given $\Phi=\{(t_i,\psi_i): i=1,\ldots,K\}$, simulate $\omega\sim \mathbb{WF}^{t,x,y}$ 
    at times $\{t_1,\ldots,t_K\}$.
    \IF{$I(x,y, \vartheta, \Phi, \omega )=1$}
        \RETURN $S(\omega)=\{(0,x), (t_1,\omega_{t_1}),\ldots, (t_K,\omega_{t_K}), (T,y)\}$
   \ELSE \STATE Go back to Step 1. 
    \ENDIF
\end{algorithmic}
\end{algorithm}

By definition of $a(x,y;\vartheta)$, we have
\begin{equation}\label{GaNoeq:Indicator_exp}
 a(x,y;\vartheta)=\mathbb{E}[I(x,y, \vartheta, \Phi, \omega )],
\end{equation}
where the expectation is taken w.r.t. the joint distribution of $\Phi$ and $\omega$.

Because $\Psi$ is uniformly distributed on $[0,1]$, the probability of none of the sampled marks $\psi_i, 1\leq i\leq K$ to be below  
$g(\omega_{t_i};\vartheta)$ is
$\prod_{i=1}^K 1- (\phi(\omega^j_{t^j_i};\vartheta)-\phi^-(\vartheta))/(\phi^+(\vartheta)-\phi^-(\vartheta))$
and for each $\vartheta\in\Theta$,
\begin{equation}\label{GaNoeq:Est_a_point}
a(\Upsilon^j,\omega^j,x,y;\vartheta)=\prod_{i=1}^{K^j} 1- \frac{\phi(\omega_{t_i};\vartheta)-\phi^-(\vartheta)}{\phi^+(\vartheta)-\phi^-(\vartheta)}
\end{equation}
is an unbiased Monte Carlo estimator of (\ref{GaNoeq:Indicator_exp}).

In order to make the estimator proposed in (\ref{GaNoeq:Est_a_point}) suitable for any possible value of $\vartheta$, we refer to what is coined as the Simultaneous Acceptance Method (SAM) in \cite{GaNoBeskos2006a}.

Let $\Phi_+$ be a marked Poisson process on $[0,t]\times [0,1]$ with rate $\phi^+ \geq \sup_{\vartheta} (\phi^+(\vartheta)-\phi^-(\vartheta))$, and $U=\{U_1, \ldots, U_K\}$ a vector of i.i.d. uniform random variables on $[0,1]$. Using the thinning property of the Poisson process, see, e.g., Section 5 of \cite{GaNoKingman1992}, we can recover the process $\Phi$ by deleting each point of $\Phi_+$ with probability $1 - (\phi^+(\vartheta)-\phi^-(\vartheta) )/\phi^+$, yielding

\begin{equation*}
 I(x,y, \vartheta, \Phi_+, \omega, U )=\prod_{i=1}^K \mathbb{I} \left[1-\mathbb{I} \left[U_i<\frac{\phi^+(\vartheta)-\phi^-(\vartheta)}{\phi^+}\right]
 \frac{\phi(\omega_{t_i};\vartheta)-\phi^-(\vartheta)}{\phi^+(\vartheta)-\phi^-(\vartheta) }\leq \psi_i\right].
\end{equation*}
After taking the expectation w.r.t. $U$ one obtains 
\begin{equation*}
 a(x,y;\vartheta)=\mathbb{E}[\mathbb{E}_U[I(x,y, \vartheta, \Phi_+, \omega, U )]],
\end{equation*}
where, again, the outer expectation is taken w.r.t. the joint distribution of $\Phi_+$ and $\omega$. Thus, a (simultaneous) unbiased estimator of $a(x,y;\vartheta)$ is
\begin{equation}\label{GaNoeq:Est_a_simult}
a(\Upsilon_+,\omega,x,y;\vartheta)=\frac{1}{N} \sum_{j=1}^N a(\Upsilon_+^j,\omega^j,x,y;\vartheta),
\end{equation}
where
\begin{equation*}
a(\Upsilon_+^j,\omega^j,x,y;\vartheta)=\prod_{i=1}^{K^j} 1- (\phi(\omega^j_{t^j_i};\vartheta)-\phi^-(\vartheta))/\phi^+.
\end{equation*}

\subsection{Estimation of conditioned neutral Wright-Fisher diffusion densities}
To complete our estimation approach, it only remains to devise a strategy for computing $p(M,x,y,t)$. From equation (14) in \cite{GaNoJenkins2017}, we know that
\begin{equation}\label{GaNoeq:pxyt}%k=m+i
 p(x,y;t)=  \sum_{m=0}^{\infty} q_m^{\theta}(t)\mathbb{E}[\mathcal{D}_{\theta,L}(y)],
 %=\sum_{m=0}^{\infty} \sum^{\infty}_{k=m}(-1)^{k-m}b_{k}^{(t,\theta)}(m)\mathbb{E}[\mathcal{D}_{\theta,L}(y)],
 \end{equation}
where the expectation $\mathbb{E}[\cdot]$ is taken w.r.t.~the random variable $L\equiv L_{m,x}$, which %\sim\mathcal{B}in(m,x)$ 
is distributed as a binomial random variable with parameters $m$ and $x$.%\sim\mathcal{B}(\theta_a+l_m,\theta_A+m-l_m)$ 
\begin{equation*}
\mathcal{D}_{\theta,l}(y)=\frac{1}{B(\theta_a+l,\theta_A+m-l)}
\end{equation*}
is the probability density function of a beta random variable with parameters $\theta_a+l$ and $\theta_A+m-l$, for each realization $l$ of $L_{m,x}\sim\text{Binomial}(m,x)$, and, for each $t$, $q_m^{\theta}(t)$ is the probability mass function of a certain discrete random variable $M$ taking values in $\{m=0,1,\ldots,\}$.
Then, by definition, we can rewrite (\ref{GaNoeq:pxyt}) as
\begin{equation}\label{GaNoeq:expect_pxyt}
 p(x,y;t)= \mathbb{E}_M[\mathbb{E}_L[\mathcal{D}_{\theta,L}(y)]]=
 \sum_{m=0}^{\infty} P(M=m) \sum_{l=0}^{m}P(L_{m,x}=l)\mathcal{D}_{\theta,l}(y),
 \end{equation}
where the expectations are taken with respect to the laws of $M$ and $L_{m,x}$, respectively.

Were an analytic expression for $\mathbb{E}_M[\cdot]$ available, one could compute (\ref{GaNoeq:expect_pxyt}) exactly from its definition. Unfortunately, $q_m^{\theta}(t)$ is only known in infinite series form, see \cite{GaNoGriffiths1980}, that is,
$$
q_m^{\theta}(t)=\sum^{\infty}_{k=m}(-1)^{k-m}b_{k}^{(t,\theta)}(m),
$$  with
\begin{equation}\label{GaNoeq:b_coefs}
% b_{k}^{(t,\theta)}(m)=\frac{(\theta+2(m+i)-1)}{m!i!}
%  \frac{\Gamma(\theta+2m+i-1)}{\Gamma(\theta+m)}e^{-(m+i)(m+i+\theta-1)t/2}.
b_k^{(t,\theta)}(m)=\frac{(\theta+2k-1)}{m!(k-m)!}
\frac{\Gamma(\theta+m+k-1)}{\Gamma(\theta+m)}e^{-k(k+\theta-1)t/2}
\end{equation}
and $\theta=\theta_a+\theta_A$, rendering an exact computation of $\mathbb{E}_M[\cdot]$ impossible.

However, there exists an exact sampling strategy for drawing samples from $M$, see Algorithm 2 in \cite{GaNoJenkins2017} and Algorithm 3 in \cite{GaNoGarcia-Pareja2021}, and, given $m^j, j=1,\ldots, N$ independent Monte Carlo samples distributed according to $q_m^{\theta}(t)$, we obtain
\begin{eqnarray*}
 p(M^j,x,y;t)%&=& \sum_{l=0}^{m^j}P(L_{m^j,x}=l)\mathcal{D}_{\theta,l}(y)\\
 &=&\sum_{l=0}^{m^j} {m^j\choose l} x^l (1-x)^{m^j-l}\mathcal{D}_{\theta,l}(y),
 \end{eqnarray*}
 with 
 \begin{equation}\label{GaNoeq:MC_pxyt}
  p(M,x,y;t)=\frac{1}{N} \sum_{j=1}^N p(M^j,x,y;t),
 \end{equation}
 an unbiased estimator of $p(x,y;t)$.
 
 Thus, combining (\ref{GaNoeq:Est_a_simult}) and (\ref{GaNoeq:MC_pxyt}) we obtain
 \begin{multline}\label{GaNoeq:MCsample}
 \Pi(\Lambda,x,y;\vartheta)=p(M,x,y;t)a(\Upsilon_+,\omega,x,y;\vartheta)\\
 =\frac{1}{N} \sum_{j=1}^N p(M^j,x,y;t)\frac{1}{N} \sum_{j=1}^N a(\Upsilon_+^j,\omega^j,x,y;\vartheta),
\end{multline}
with the desired properties.

\subsection{Theoretical guarantees}\label{GaNo:SecTheory}
Theoretical results on consistency of our proposed MLE, $\vartheta^n$, follow from those shown in \cite{GaNoBeskos2009}, as we detail in this section.
So far, we have provided unbiased estimators for each independent contribution $L^i(\vartheta)$. Following the expression in (\ref{GaNoeq:Lik_MC_estim}), an estimator of the full likelihood function is
\begin{equation*}\label{GaNoeq:Lik_MC_estim_complete}
\mathcal{L}_N^n(\vartheta)=\prod_{i=1}^n \frac{1}{N}\sum_{j=1}^N L^i(\Lambda^j,\vartheta),
\end{equation*}
where the contributions are estimated independently for each observation $i=1,\ldots, n$ over $N$ Monte Carlo samples. By Kolmogorov's Strong Law of Large Numbers we know that $\mathcal{L}_N^n(\vartheta)\to \mathcal{L}^n(\vartheta)$ a.s. as $N\to \infty$, which, however, does not guarantee convergence of the maximizers $\vartheta^n_N$ of the functional averages $\mathcal{L}_N^n(\vartheta)$ to $\vartheta^n$. A sufficient condition is uniform convergence in $\vartheta$, that is, 
\begin{equation}\label{GaNoeq:Uniform_Conv}
 \lim_{N\to\infty} \sup_{\vartheta\in\Theta} |\mathcal{L}_N^n(\vartheta)-\mathcal{L}^n(\vartheta)|=0, \text{ a.s.}
\end{equation}

In order to prove (\ref{GaNoeq:Uniform_Conv}), we revert to the following general result for random elements on a Banach space \cite{GaNoGiesy1976}.

\begin{theorem}\label{GaNoth:SLLNBanach}[Theorem 2 in \cite{GaNoBeskos2009}]
Let $(\mathscr{X}, \lVert \cdot \rVert )$ be a separable Banach space, with random elements $\texttt{X}\in \mathscr{X}$ with norm $\lVert \texttt{X}\rVert$, such that
$$\mathbb{E}[\lVert \texttt{X}\rVert]<\infty \text{ and }
\mathbb{E}[\texttt{X}]=0,
$$
where recall that $\mathbb{E}[\texttt{X}]$ is defined as the unique element $\nu\in\mathscr{X}$ such that $T(\nu)=\mathbb{E}[T(\nu)]$ for all linear functionals $T\in \mathscr{X}^*$. Then, if $\texttt{X}^1, \texttt{X}^2, \ldots$ are independent copies of $\texttt{X}$,
$$
\lim_{N\to\infty} \bigg\lVert \frac{1}{N}\sum_{j=1}^N\texttt{X}^j\bigg\rVert=0.
$$
\end{theorem}
Then, the following corollary provides a proof for (\ref{GaNoeq:Uniform_Conv}).
\begin{corollary}
 Let $L(\Lambda,\vartheta)$ be defined as in (\ref{GaNoeq:likelihood_contr}), and $\Lambda^1, \Lambda^2,\ldots,$ i.i.d. copies of the random element $\Lambda$. Then,
 $$
 \lim_{N\to\infty} \sup_{\vartheta\in\Theta} \bigg| \frac{1}{N}\sum_{j=1}^NL(\Lambda^j,\vartheta)-L(\vartheta)\bigg|=0, \text{  a.s. }.
 $$
 \end{corollary}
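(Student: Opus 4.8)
The plan is to apply Theorem \ref{th:SLLNBanach} with the separable Banach space $\mathscr{X}=C(\Theta)$ of continuous real-valued functions on $\Theta$ equipped with the supremum norm $\lVert f\rVert=\sup_{\vartheta\in\Theta}|f(\vartheta)|$; separability holds because $\Theta$ is a compact subset of $\mathbb{R}^d$, hence a compact metric space. The natural random element is
$$
\texttt{X}(\cdot)=L(\Lambda,\cdot)-L(\cdot),
$$
so that $\frac{1}{N}\sum_{j=1}^N\texttt{X}^j=\frac{1}{N}\sum_{j=1}^N L(\Lambda^j,\cdot)-L(\cdot)$ and the conclusion $\lVert\frac{1}{N}\sum_j\texttt{X}^j\rVert\to 0$ is exactly the claimed uniform convergence.

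First I would check that $\texttt{X}$ is genuinely an $\mathscr{X}$-valued random element, i.e.\ that $\vartheta\mapsto L(\Lambda,\vartheta)$ is a.s.\ continuous. In (\ref{eq:likelihood_contr}) the factor $p(M,x,y;t)$ does not depend on $\vartheta$, the exponential prefactor is continuous in $\vartheta$ by continuity of $A(x;\cdot),A(y;\cdot)$ (Assumption 4) and $\phi^-(\cdot)$ (Assumption 3), and $a(\Upsilon_+,\omega,x,y;\vartheta)$ is a finite product of the maps $1-(\phi(\omega_{t_i};\vartheta)-\phi^-(\vartheta))/\phi^+$, each continuous in $\vartheta$ by Assumptions 2 and 3. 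A finite product and composition of continuous functions is continuous, so $L(\Lambda,\cdot)\in C(\Theta)$ almost surely; continuity of $L(\cdot)=\mathbb{E}[L(\Lambda,\cdot)]$ then follows by dominated convergence using the bound obtained next.

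Next I would verify the two hypotheses of Theorem \ref{th:SLLNBanach}. For integrability, each factor of $a$ lies in $[0,1]$ since $\phi^+\geq\sup_\vartheta(\phi^+(\vartheta)-\phi^-(\vartheta))$, so $0\leq a\leq 1$, while the exponential prefactor is dominated by the deterministic constant $e^{C}$ with $C=\sup_{\vartheta\in\Theta}\{A(y;\vartheta)-A(x;\vartheta)-t\phi^-(\vartheta)\}<\infty$ (a continuous function on the compact $\Theta$). Hence $\sup_\vartheta|L(\Lambda,\vartheta)|\leq e^{C}\,p(M,x,y;t)$, and since $p(M,x,y;t)$ is $\vartheta$-free with $\mathbb{E}[p(M,x,y;t)]=p(x,y;t)<\infty$ by (\ref{eq:expect_pxyt}), we obtain $\mathbb{E}[\lVert\texttt{X}\rVert]\leq e^{C}p(x,y;t)+\lVert L\rVert<\infty$. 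For the zero-mean condition, recall that in $C(\Theta)$ every continuous linear functional $T$ is integration against a finite signed Borel measure $\mu_T$ (Riesz representation); using $\mathbb{E}[\lVert\texttt{X}\rVert]<\infty$ to justify Fubini,
$$
T(\mathbb{E}[\texttt{X}])=\mathbb{E}[T(\texttt{X})]=\int_\Theta\mathbb{E}[\texttt{X}(\vartheta)]\,d\mu_T(\vartheta)=0,
$$
because $\mathbb{E}[\texttt{X}(\vartheta)]=\mathbb{E}[L(\Lambda,\vartheta)]-L(\vartheta)=0$ pointwise by the unbiasedness established through Lemma \ref{lem:Bridges} and the construction of Section \ref{GaNo:sec_MCest}. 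Since $\mathscr{X}^*$ separates points, $\mathbb{E}[\texttt{X}]=0$.

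With both hypotheses in hand, Theorem \ref{th:SLLNBanach} yields $\lVert N^{-1}\sum_{j=1}^N\texttt{X}^j\rVert\to 0$ almost surely, which is the assertion. I expect the main obstacle to be not the (routine) integrability bound but the careful identification of the random object as an element of $C(\Theta)$ with a.s.\ continuous realizations, together with the translation of the abstract Banach-space mean-zero condition into the concrete pointwise unbiasedness $\mathbb{E}[L(\Lambda,\vartheta)]=L(\vartheta)$ via the Riesz representation and Fubini; this is where one must be attentive to measurability and to the interchange of expectation with an arbitrary continuous linear functional.
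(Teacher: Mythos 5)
Your proof follows essentially the same route as the paper's: the same separable Banach space $C(\Theta)$ with the supremum norm, the same random element $L(\Lambda,\cdot)-L(\cdot)$, verification of a.s.\ continuity from Assumptions 2--4, an integrability bound, and the mean-zero condition feeding into Theorem \ref{th:SLLNBanach}. If anything you are slightly more careful than the paper at two points: you retain the random factor $p(M,x,y;t)$ in the bound $\sup_\vartheta|L(\Lambda,\vartheta)|\leq e^{C}p(M,x,y;t)$ and control it via $\mathbb{E}[p(M,x,y;t)]=p(x,y;t)<\infty$ (the paper's stated deterministic bound $\kappa$ silently omits this factor), and you spell out the Riesz/Fubini argument for $\mathbb{E}[\texttt{X}]=0$ where the paper simply cites Corollary 1 of its reference and uniqueness of the Banach-space expectation.
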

 \begin{proof}
 Take $(\mathscr{X}, \lVert \cdot \rVert )=(F,\lVert \cdot \rVert)$ the set of continuous mappings on the compact set $\Theta$, endowed with the supremum norm $\lVert f \rVert =\sup_{\vartheta\in\Theta} |f(\vartheta)|$, for any $f\in F$. It is well known that $(F,\lVert \cdot \rVert)$ is a separable Banach space, see, e.g., \cite{GaNoSemadeni1965}.
 
Assumption 2. implies that $\phi(x;\cdot)$ and  $A(x;\cdot)$ are continuous in $\Theta$. Then, a.s. continuity of $L(\Lambda,\cdot)$ follows from assumption 3, and we have $L(\Lambda,\cdot)\in F$. 

Now, because of (\ref{GaNoeq:Po_Prob}) we know that $a(\Upsilon,\omega,x,y;\vartheta)\leq 1$ a.s. . Moreover, assumption 4. implies that  $|L(\Lambda, \vartheta)|\leq \kappa< \infty,$
%$$|L(\Lambda, \vartheta)|\leq |\exp\left\{A^+(\vartheta)-A(x; \vartheta)-t\phi^-(\vartheta)\right\}|=\kappa< \infty,$$
where $$\kappa=\sup_{\vartheta\in\Theta}\exp\left\{A(y; \vartheta)-A(x; \vartheta)-t\phi^-(\vartheta)\right\}$$
is finite for being the supremum of a continuous function over the compact set $\Theta$. Then, $\mathbb{E}[\lVert L(\Lambda, \cdot)\rVert]<\infty$. Using Corollary 1 in \cite{GaNoBeskos2009} we obtain $L(\cdot)\in F$, and by uniqueness of the expectation we have $\mathbb{E}[L(\Lambda,\cdot)-L(\cdot)]= 0$.
 Applying  Theorem \ref{GaNoth:SLLNBanach} to $L(\Lambda,\cdot)-L(\cdot)$ concludes the proof.
 \end{proof}
Finally, the uniform convergence in (\ref{GaNoeq:Uniform_Conv}) and the compactness of $\Theta$ imply the consistency of our proposed estimator, which validates our estimating strategy. We formalize this statement in the following Corollary.
\begin{corollary}
Let $\{\vartheta_N^n\}_N$ be any sequence of maximizers of $\{\mathcal{L}^n_N(\vartheta)\}_N$. If $\vartheta^n$ is the unique element of $\text{arg}\max_{\vartheta\in\Theta}\mathcal{L}^n(\vartheta)$, then $\lim_{N\to\infty} \vartheta_N^n =\vartheta^n$.
\end{corollary}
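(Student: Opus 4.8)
The plan is to run the standard argmax-consistency argument for M-estimators, feeding in the uniform convergence (\ref{eq:Uniform_Conv}) as the only probabilistic ingredient and using compactness of $\Theta$ together with the uniqueness of $\vartheta^n$ to pin down the limit. I would fix an element of the almost-sure event on which (\ref{eq:Uniform_Conv}) holds and argue deterministically from there on.

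First I would record that $\vartheta\mapsto\mathcal{L}^n(\vartheta)$ is continuous on $\Theta$: each contribution $L(\cdot)=\mathbb{E}[L(\Lambda,\cdot)]$ belongs to $F$ by the previous Corollary, and $\mathcal{L}^n=\prod_{i=1}^n L^i$ is a finite product of such continuous maps, hence continuous on the compact set $\Theta$.

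The core step is to show that every convergent subsequence of $\{\vartheta_N^n\}_N$ has limit $\vartheta^n$. Since $\Theta$ is compact, an arbitrary subsequence admits a further subsequence $\vartheta_{N_k}^n\to\vartheta^*\in\Theta$. For any fixed $\vartheta\in\Theta$ the maximizing property gives $\mathcal{L}_{N_k}^n(\vartheta_{N_k}^n)\geq\mathcal{L}_{N_k}^n(\vartheta)$, and I would let $k\to\infty$ on both sides. The right-hand side tends to $\mathcal{L}^n(\vartheta)$ by (\ref{eq:Uniform_Conv}). For the left-hand side I would use the triangle inequality
$$
|\mathcal{L}_{N_k}^n(\vartheta_{N_k}^n)-\mathcal{L}^n(\vartheta^*)|
\leq \sup_{\vartheta\in\Theta}|\mathcal{L}_{N_k}^n(\vartheta)-\mathcal{L}^n(\vartheta)|
+|\mathcal{L}^n(\vartheta_{N_k}^n)-\mathcal{L}^n(\vartheta^*)|,
$$
whose first term vanishes by (\ref{eq:Uniform_Conv}) and whose second term vanishes by continuity of $\mathcal{L}^n$ since $\vartheta_{N_k}^n\to\vartheta^*$. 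Hence $\mathcal{L}^n(\vartheta^*)\geq\mathcal{L}^n(\vartheta)$ for every $\vartheta\in\Theta$, so $\vartheta^*\in\text{arg}\max_{\vartheta\in\Theta}\mathcal{L}^n(\vartheta)$, and the assumed uniqueness forces $\vartheta^*=\vartheta^n$.

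Finally I would invoke the elementary fact that a sequence in a compact metric space, all of whose convergent subsequences share the single limit $\vartheta^n$, must itself converge to $\vartheta^n$: otherwise some subsequence would stay at distance at least $\varepsilon>0$ from $\vartheta^n$, yet compactness would extract from it a further sub-subsequence converging to $\vartheta^n$, a contradiction. The only delicate point is the triangle-inequality splitting in the core step, which must simultaneously absorb the \emph{moving} maximizer $\vartheta_{N_k}^n$ through the uniform bound and through continuity of $\mathcal{L}^n$; everything else is bookkeeping on the fixed almost-sure event.
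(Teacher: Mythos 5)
Your proof is correct and follows exactly the route the paper intends: the paper states this corollary without a written proof, asserting only that it follows from the uniform convergence in (\ref{eq:Uniform_Conv}) and the compactness of $\Theta$, and your argument is the standard argmax-consistency chain (subsequence extraction by compactness, the triangle-inequality splitting to handle the moving maximizer, uniqueness to identify the limit, and the subsequence principle) that fills in precisely those details.
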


\section{Numerical examples}\label{GaNo:Sec_Num_exp}
 We exemplify the performance of our method through two numerical examples that refer to a widely used instance of the Wright-Fisher diffusion model.
 
 We consider (\ref{GaNoeq:WF}) with $\gamma(x; \vartheta)=\alpha(x)+\vartheta x(1-x) [x+h(1-2x)]$, which corresponds to the Wright-Fisher diffusion with diploid selection. For illustrative purposes, we focus on the haploid case, that is, $h=1/2$, for which the quantities of relevance are significantly simplified. In this case,
 $$\eta(x; \vartheta)=\frac{\vartheta}{2}, \text{ so that }
 A(x;\vartheta)=\frac{\vartheta}{2} x \text{ and } \phi(x;\vartheta)=\frac{1}{2}[x(1-x)\frac{\vartheta^2}{4}+\vartheta\alpha(x)].$$
 
The bounds for $\phi(\cdot;\vartheta)$ are defined from the quantities
 $$
 %k_1=\frac{1}{32}\vartheta^2+ \frac{\theta_a-\theta_A}{8}\vartheta+\frac{1}{8}(\theta_a+\theta_A)^2,
 k_1=\frac{1}{32}\left(\vartheta^2+ 4\vartheta(\theta_a-\theta_A)+4(\theta_a+\theta_A)^2\right),\ \ k_2=
 \frac{\vartheta}{4}\theta_a, \ \ k_3=-\frac{\vartheta}{4}\theta_A,
 $$
 where $k_1$ is the vertex of the parabola defined by $\phi(\cdot;\vartheta)$, $k_2=\phi(0;\vartheta)$ and $k_3=\phi(1;\vartheta)$. Considering a range of mutation rates $0 < \theta_a, \theta_A< \frac{1}{2}$, $\phi^+(\vartheta)=k_1$ and $\phi^-(\vartheta)=\min(k_2, k_3)$.

 In this example, the parameter $\vartheta$ quantifies the relative fitness between type $a$ and type $A$ in the time-scaled weak selection-weak mutation diffusion model, see, e.g., \cite{GaNoEtheridge2011}. Relative fitness between type $a$ and type $A$ is often defined as $1+s:1$, where $s=(f_a-f_A)/f_A$, for $f_a$ and $f_A$ that quantify the natural selective advantage of types $a$ and $A$, respectively. Then, $s$ is referred to as selection coefficient. If $f_a$ is interpreted as division rate, one can assume it to be as small as 0 (in which case type $a$ would be lethal or such that completely prevents reproduction of the individual carrying it), yielding $s=-1$. On the opposite end, assuming type $a$ to be highly beneficial, $s$ could be, in principle, as large as necessary. However, experimental results on RNA viruses have shown selection coefficient values of individual mutations to be $|s|\approx1$, see \cite{GaNoVisher2016}, which in the diffusion limit of large (but finite) populations yields $\vartheta\in\Theta=[-c, c]$, with $c$ the population size.
 
 Similarly, most experimental results on RNA viruses quantify mutation probabilities per nucleotide per generation, which are equal or larger than individual allele mutations \cite{GaNoGillespie1983a}, of an order from $\approx10^{-6}$ to $\approx 10^{-4}$, see \cite{GaNoCui2022}. In the time-scaled diffusion limit, mutation rates can then reach orders from $\approx 10^{-3}$ to $\approx 10^{-1}$, assuming that the underlying microbial population is large enough ($\gtrapprox 1000$).

Numerical results showing the consistency of our proposed MLE are reported in Table \ref{GaNo:num_estimates}. We generated two benchmark path-datasets $D1$ and $D2$ (Fig. \ref{GaNo:gene_path}) of size $n=100$ with $\Delta t_i= 1$ for $i=1,\ldots, n$, and parameters $\theta_a=\theta_A=0.02$, $\vartheta=0.7$ ($a$ beneficial allele) for $D1$, and $\theta_a=\theta_A=0.1$, $\vartheta=-0.9$ ($a$ deleterious allele) for $D2$, both corresponding to a weak selection-weak mutation regime \cite{GaNoGillespie1983a}. The data were simulated using the exact algorithm (Algorithm 6) presented in \cite{GaNoJenkins2017}, with $A^+(\vartheta)=|\frac{\vartheta}{2}|$. Neutral Wright-Fisher bridges' paths $\omega\sim\mathbb{WF}^{t,x,y}$ in Step 2 of Algorithm 1 above, were simulated using the EWF sampler presented in \cite{GaNoSant2023} and wrapped in our Python 3.9.12 implementation. Maximum (log)-likelihood estimators were computed using Brent's optimization algorithm, see \cite{GaNoBrent1973}. For each $N$, we estimated the standard error of $\vartheta_N^n$ based on 50 bootstrapped samples taken with replacement. For the spefic case $N=1$, each of the 50 bootstrapped samples were computed choosing one random Monte Carlo sample from the $N=1000$ scenario.
\begin{table}
 \caption{Consistency of the MLE $\vartheta_N^n$, $N\to\infty$. Results from benchmark datasets $D1$ and $D2$ with $n=100$ and $\Delta t_i= 1$ for $i=1,\ldots, n$, and parameters $\theta_a=\theta_A=0.02$, $\vartheta=0.7$ for $D1$ and $\theta_a=\theta_A=0.1$, $\vartheta=-0.9$ for $D2$. Standard errors (se) based on 50 bootstrapped samples are shown in parenthesis.}
 \label{GaNo:num_estimates}
 \begin{center}
\begin{tabular}{p{0.25\textwidth}p{0.3\textwidth}p{0.3\textwidth}}
%&\multicolumn{1}{c}{$\vartheta=0.7$}&\multicolumn{1}{c}{$\vartheta=-0.9$}\\
\hline
$N$&$\vartheta_N^n(\text{se})$ for $D1$ &$\vartheta_N^n (\text{se})$ for $D2$\\%Looking at the case \theta=0.01, \vartheta=-0.9
\hline
1& 0.752 (0.7400)& -0.931 (0.3322)\\%0.7522487236905328 (0.7400154815561699) ##-0.9316051467100289 (0.33223925805965876)
10&0.632 (0.0767)&--0.888 (0.0450)\\%0.6328345026672841 (0.07675097493635287)##-0.887540474114612 (0.04504714129825581)
20& 0.672 (0.0587)&-0.915 (0.0373)\\%0.6720799867290799 (0.05872594885187946)##-0.9150034823473504 (0.037279127312309555) or -0.8710061350401168 (0.03753607188584391)
50& 0.709 (0.0276)&-0.909 (0.0242)\\%0.7096780262551373 (0.027623799303230966)## -0.8687895118197563 (0.021947364349354317) or -0.9094090649387752 (0.02420795946549884)
100& 0.719 (0.0249)&-0.930 (0.0202)\\%0.7199479755767689 (0.024946723007510756)## -0.9146867748289012 (0.015858608363407023) or -0.9306496366711248 (0.020196544069841876)
200& 0.695 (0.0148)&-0.911 (0.0140) \\%0.6952012106317109 (0.014856563269014971)##-0.9113858956755034 (0.014040060483496323)
500& 0.707 (0.0088)&-0.908 (0.0075)\\%0.707504223935597 (0.009991634201263512)##-0.9084136182521295 (0.0075453453148961805) and a second one -0.9084136182521295
\hline
Large $N (=10^3)$& 0.701 (0.0083)&-0.916 (0.0055)\\%0.7129019348692497 (0.008322744626508142)## -0.9168224601607846 (0.005470694055283543)
\hline
\end{tabular}
\end{center}
\end{table}
\vspace{-20pt}
\begin{figure}
\caption{Generated benchmark data sets $D1$ and $D2$ with $n=100$ points and $\Delta t_i=1$ for $i=1,\ldots, n$, with parameters: }
\label{GaNo:gene_path}
\begin{center}
    \subfloat[\centering $\theta_a=\theta_A=0.02$ and $\vartheta=0.7$ for $D1$.]{{\includegraphics[scale=0.39]{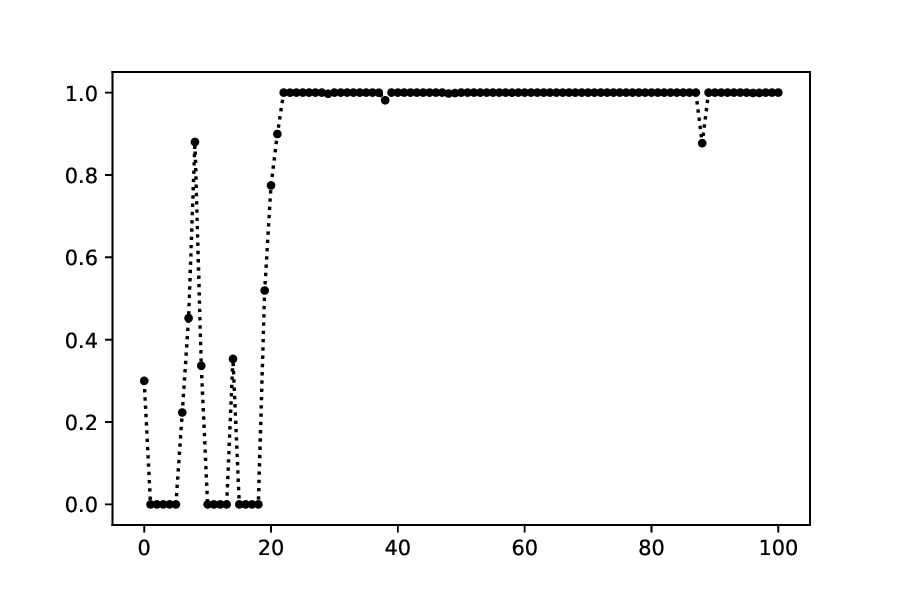} }}%
    \subfloat[\centering $\theta_a=\theta_A=0.1$ and $\vartheta=-0.9$ for $D2$. ]{{\includegraphics[scale=0.39]{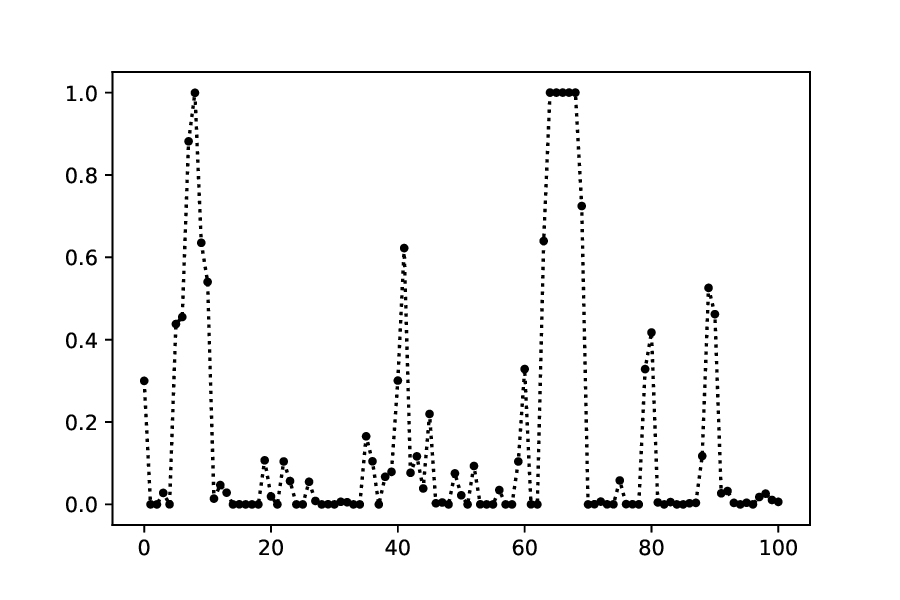} }}%
\end{center}
\end{figure}
\vspace{-20pt}
The cost of generating the Monte Carlo samples to compute $ \Pi(\Lambda,x,y;\vartheta)$ in equation (\ref{GaNoeq:MCsample}) stems from the computational complexity results presented in \cite{GaNoJenkins2017}. In brief, the main bottlenecks arise from two sources: i) the number of terms (number of coefficients) needed to draw samples from the random variable $M$ (required for computing $p(M,x,y;t)$ in (\ref{GaNoeq:MC_pxyt}) and for drawing exact samples from the candidate $\omega\sim \mathbb{WF}^{t,x,y}$ in Step 2 of Algorithm 1), and ii) the number of Poisson points required until the first skeleton is accepted in Algorithm 1.

Specific details can be found in Proposition 5 in \cite{GaNoJenkins2017}, where it is proven that the number of coefficients $b_{k}^{(s,\theta)}(m)$, see (\ref{GaNoeq:b_coefs}), needed to sample from $M$ is of $o(s^{-(1+\kappa)})$, for any $\kappa>0$, and in Proposition 7 in \cite{GaNoJenkins2017}, which shows that the expected number of Poisson points needed per accepted path increases exponentially with $t$. The former can be dealt with by using a suitable approximation (Theorem 1 in \cite{GaNoJenkins2017}) when $s\to 0$ (in practice, roughly when $s<0.05$). The latter is generally of no concern in our present application, where we assume discrete, but taken often enough, observations.

All the software used for producing the results shown in this section can be found at \url{https://github.com/celiagp/UnbiasedEstimation\_Code}.

\section{Monte Carlo maximum likelihood estimator for the coupled Wright-Fisher diffusion}\label{GaNo:Sec_Num_CWF} 
The approach presented so far can also be applied in a multidimensional setting. Let us now consider the evolution of the frequency of genetic variants across $L$ different locations in the genome. As above, we focus on the simplest case, where only two different genetic variants (or types) are possible in each location. The interest now lies, not only on the selective advantage of one type over the other at each location, but also in considering pairwise selective interactions of genetic variants across locations. The study of genetic interactions is a highly active field in biological evolution, where it is known that genetic effects seldom act individually, but they rather combine their influence resulting into complex networks of interactions, see, e.g., \cite{GaNoDomingo2019} for an overview.

In this context we consider the coupled Wright-Fisher diffusion model, where $X$ is described as the weakly unique solution of the SDE
\begin{equation}\label{GaNoeq:coupled_WF_general}
dX_t=[\alpha(X_t;\theta)+G(X_t;\vartheta)]dt+ D^{\frac{1}{2}}(X_t)dB_t, \ X_0=x_0\in[0,1]^L,\ t\in[0,T],
\end{equation}
with $X$ an $L$-dimensional vector of frequencies of allele types at locations $k=1,\ldots, L$, $\alpha(x^k;\theta)$ is defined as above, $G(x;\vartheta)$ is a coupling term that describes natural selection and pairwise selective interactions across locations and $D(x)$ is a diagonal matrix with entries $x^k(1-x^k)$.

The coupling term $G:[0,1]^L\times\Theta\to\mathbf{R}^L$ has entries with the general form
\begin{equation*}
G^k(x;\vartheta)=x^k(1-x^k)\bigg(s^{k1}- s^{k2}+\sum_{\underset{l\neq k}{l=1}}^{L} (h^{kl}_{12}-h^{kl}_{22} + (h^{kl}_{11}-h^{kl}_{12}-h^{kl}_{21}+h^{kl}_{22}) x^{l}\bigg),
\end{equation*}
where $s^{kj}$ for $j=1,2$, is the selective advantage coefficient of type $j$ at location $k$, and 
$h^{kl}_{jr}$ is the pairwise selective interaction between type $j$ at location $k$ and type $r$ at location $l$, for $r=1,2$.

Following the recent results on exact simulation for coupled Wright-Fisher diffusions \cite{GaNoGarcia-Pareja2021}, one can write an equivalent to Lemma 1 above, and build an unbiased estimator for each corresponding likelihood contribution as in (\ref{GaNoeq:likelihood_contr}). Given $i=1,\ldots, n$ discrete observations without error of paths $k=1,\ldots, L$ from the process (\ref{GaNoeq:coupled_WF_general}), each likelihood contribution can be written as
\begin{equation}\label{GaNoeq:likelihood_contrCWF}
L^i(\Lambda, \vartheta)=\exp\left\{\tilde{A}(y; \vartheta)-\tilde{A}(x; \vartheta)-t\tilde{\phi}^-(\vartheta)\right\} \tilde{p}(M,x,y,t)\tilde{a}(\Upsilon,\tilde{\omega},x,y;\vartheta),
\end{equation}
with $\vartheta=(s, h)$ a multidimensional selection parameter, and 
\begin{itemize}
\item[i)] The function $\tilde{\phi}:[0,1]^L\times\Theta\to \mathbb{R}$ is defined as
    
    $$\tilde{\phi}(x;\vartheta):=\dfrac{1}{2}\left[ (V(x;\vartheta))^T D(x) V(x;\vartheta)
      +2 (V(x;\vartheta))^T \alpha(x;\theta)\right], $$
    where $V^k(x;\vartheta)=s^{k1}- s^{k2}+\sum_{\underset{l\neq k}{l=1}}^{L} (h^{kl}_{12}-h^{kl}_{22} + (h^{kl}_{11}-h^{kl}_{12}-h^{kl}_{21}+h^{kl}_{22}) x^{l}$, for $k,l=1,\ldots, L$, and the corresponding bounds are $\tilde{\phi}^-(\vartheta)\leq \tilde{\phi}(x;\vartheta)\leq\tilde{\phi}^+(\vartheta)$.
    \item[ii)]
    The function $\tilde{A}:[0,1]^L\times\Theta\to \mathbb{R}$ is defined as
    $$
        \tilde{A}(X_t;\vartheta)=\int_0^t V(X_s)dX_s=\sum_{k=1}^{L}
 \bigg(K^{k}_s X^{k}_t
 +\sum_{l=1}^{L}K^{k}_{l} X^{k}_t\\
 +\displaystyle\sum_{l=k+1}^{L}
 K^{kj}_{lr}\ X^{k}_tX^{l}_t\bigg),
    $$
    with $K^{k}_s, K^{k}_l$ and $K^{kj}_{lr}$ appropriate constants.
    \item[iii)] The function $\tilde{p}(x,y;t)$ is the joint distribution of $L$ independent neutral Wright-Fisher bridges from $x$ to $y$, that is, 
    $$\tilde{p}(x,y;t)=\prod_{k=1}^L p(x^k,y^k;t)=\prod_{k=1}^L\mathbb{E}_M[\mathbb{E}_L[\mathcal{D}_{\theta,L}(y^k)]].$$ Then, function $\tilde{p}(M,x,y;t)=\frac{1}{N} \sum_{j=1}^N \tilde{p}(M^j,x,y;t)$ is
    an unbiased estimator of $\tilde{p}(x,y;t)$, where
    \begin{eqnarray*}
        \tilde{p}(M^j,x,y;t)%&=& \sum_{l=0}^{m^j}P(L_{m^j,x}=l)\mathcal{D}_{\theta,l}(y)\\
            &=&\prod_{k=1}^L \left(\sum_{l=0}^{m^{jk}} {m^{jk}\choose l} (x^k)^l (1-x^k)^{m^{jk}-l}\mathcal{D}_{\theta,l}(y^k)\right),
 \end{eqnarray*}
   with $m^{jk}$, $j=1,\ldots, N$, independent Monte Carlo samples from $q_m^{\theta}(t)$, which are drawn independently for every location $k=1,\ldots, L$.
    \item[iv)] The function $\tilde{a}(\Upsilon^j,\tilde{\omega},x,y;\vartheta)$ is defined as in (\ref{GaNoeq:Est_a_point}), with
    $$\tilde{a}(\Upsilon^j,\omega,x,y;\vartheta)=\prod_{i=1}^{\tilde{K^j}} 1- \frac{\tilde{\phi}(\tilde{\omega}_{t_i};\vartheta)-\tilde{\phi}^-(\vartheta)}{\tilde{\phi}^+(\vartheta)-\tilde{\phi}^-(\vartheta)},$$
    where $\tilde{K^j}\sim\text{Po}(\tilde{\phi}^+(\vartheta)t)$ and $\tilde{\omega}\sim \mathbb{WF}L^{t,x,y}$, where the latter denotes the joint law of $L$ independent neutral Wright-Fisher bridges from $x$ to $y$.
     \end{itemize}

Clearly, all assumptions required in Section \ref{GaNo:SecTheory} are fulfilled, and thus, an unbiased estimator of (\ref{GaNoeq:likelihood_contrCWF}) is available, providing an extension of our method to the multidimensional setting.

The analogous to Algorithm 1 in the coupled Wright-Fisher multidimensional setting can be derived from Algorithm 4 in \cite{GaNoGarcia-Pareja2021} and the related computational costs stem from Proposition 4.1 in \cite{GaNoGarcia-Pareja2021}.

\section{Conclusion}\label{GaNo:Sec_Conc}
In this paper we have presented an unbiased Monte Carlo likelihood-based inference approach for the selection parameter of a class of Wright-Fisher diffusion processes. The main advantage of our method is that it is based on exact simulation of Wright-Fisher diffusions, which circumvents any source of error due to numerical approximations and provides consistent maximum likelihood estimators. We have illustrated the performance of our method in two numerical examples showing promising results.

Future work includes exploring the joint estimation of mutation and selection. However, the rejection mechanism at the core of existing exact simulation algorithms for Wright-Fisher diffusions requires mutation parameters for candidate and target paths to be the same, and thus, this extension would require to develop new exact simulation techniques, which are outside of the scope of this paper.

\acknowledgement The authors wish to thank Anne-Florence Bitbol for useful discussions and Davide Pradovera for helpful suggestions on code implementation.
% \bibliographystyle{unsrt}
% \bibliography{GaNoUnb_Estim}

\end{document}